\newtheorem{thm}{Theorem}[section]
\newtheorem{prop}[thm]{Proposition}
\theoremstyle{definition}
\newtheorem{dfn}[thm]{Definition}
\newtheorem{conj}[thm]{Conjecture}
\newtheorem{remark}[thm]{Remark}
\theoremstyle{plain}
\newtheorem{cor}[thm]{Corollary}
\numberwithin{equation}{section}
\newcommand{\C}{\mathbb{C}}
\newcommand{\N}{\mathbb{N}}
\newcommand{\Q}{\mathbb{Q}}
\newcommand{\R}{\mathbb{R}}
\newcommand{\Z}{\mathbb{Z}}
\newcommand{\mcO}{\mathcal{O}}
\newcommand{\lra}{\longrightarrow}
\newcommand{\ra}{\rightarrow}
\newcommand{\mrm}[1]{\mathrm{#1}}
\newcommand{\psmat}[4]{\bigl( \begin{smallmatrix} #1 & #2 \\ #3 & #4 \end{smallmatrix} \bigr)}
\title[Remarks on $q$-exponents of GMF's]{Remarks on $q$-exponents of generalized modular functions}
\author[N. Kumar]{Narasimha Kumar}
\email{narasimha.kumar@iith.ac.in}
\address{
Department of Mathematics \\
Indian Institute of Technology Hyderabad\\
Ordnance Factory Estate, Yeddumailaram - 502205\\
Andhra Pradesh, INDIA. 
}
\date{}
\begin{document}

\begin{abstract}
In this note, we prove multiplicity one theorems for generalized modular functions (GMF),
in terms of their $q$-exponents, and make a general statement about the nature of values that the prime $q$-exponents of a GMF can take.
We shall also study the integrality of general $q$-exponents of a GMF and give an upper bound on the first sign change of these $q$-exponents. 
\end{abstract}
\subjclass[2010]{Primary 11F03,11F11; Secondary 11F30}
\keywords{Generalized modular functions, multiplicity one theorems, $q$-exponents, sign changes}

\maketitle

In the theory of modular forms, multiplicity one theorems play a significant role.
In the integral weight case, there are several multiplicity one theorems in the literature, see~\cite{DW00},~\cite{Pia79} for more details. The case is similar for half-integral weight modular forms 
and Siegel modular forms, see~\cite{Koh92},~\cite{Sah13},~\cite{SW03}. However, for generalized modular functions, we have not found any such theorems
in the literature. In \S\ref{sign-section}, we prove several  multiplicity one theorems for generalized modular functions, in terms of their $q$-exponents.
In fact, we show that the number of sign changes of $q$-exponents itself determine the generalized modular
function, up to a non-zero scalar. We improve this version, 
by assuming the pair Sato-Tate conjecture (Conjecture~\ref{Sato-Tate-conj} in the text) for a pair of non-CM Hecke eigenforms of integral weight.

In \S\ref{natureofvalues}, we sharpen the result of~\cite[Thm. 1]{KM11}, which gives information about the nature of values that 
the prime $q$-exponents of a GMF can take.

 In \S\ref{first sign change}, we shall give an upper bound on the first sign change of general $q$-exponents
of generalized modular functions. In the integral (or half-integral) weight modular forms, there are several results 
on producing an explicit upper bound on the first sign change of their Fourier coefficients (cf.~\cite{CK02},~\cite{CK13},~\cite{Mat12}).
In the same section, we also show  that the $q$-exponents $c(n)(n \in \N)$ are non-zero and integral only for finitely many $n$'s,
which generalizes~\cite[Thm. 5.1]{Kum12} to general $q$-exponents.

\section{Preliminaries}	
\label{preliminaries} 
In this section, we will recall the definition of generalized modular functions and some basic results about them.
We refer the reader to~\cite{KM03} for more details.
\begin{dfn}
We say that $f$ is a generalized modular function (GMF) of integral weight $k$ on $\Gamma_0(N)$, if $f$ is a holomorphic 
function on the upper half-plane $\mathbb{H}$ and
$$ f \left(\frac{az+b}{cz+d} \right)= \chi(\gamma) (cz+d)^k f(z) \quad \forall \gamma=\psmat{a}{b}{c}{d} \in \Gamma_0(N)$$
for some (not necessarily unitary) character $\chi:\Gamma_0(N) \ra \C^{*}$.
\end{dfn}
We will also suppose that $\chi(\gamma) = 1$ for all parabolic $\gamma \in \Gamma_0(N)$ of trace $2$. 
We remark that in~\cite{KM03}, a GMF in the above sense was called as a parabolic GMF (PGMF).

Let $f$ be a non-zero generalized modular function of weight $k$ on $\Gamma_0(N)$. Then $f$
has an infinite product expansion 
\begin{equation}
\label{infinite-expansion}
 f(z) = c_0 q^h \prod_{n = 1}^{\infty} (1-q^n)^{c(n)},
\end{equation}
where the product on the right-hand side of~$\eqref{infinite-expansion}$ is convergent in a small neighborhood of $q=0$, where
$q = e^{2 \pi i z}$. Here $c_0$ is a non-zero constant, $h$ is the order of $f$ at infinity, and the $c(n) (n \in \N)$ are uniquely
determined complex numbers~\cite{BKO04},~\cite{ES96}. 

If $f$ is a GMF of weight $0$ on $\Gamma_0(N)$ with $\mrm{div}(f) =  0$,
then its logarithmic derivative
\begin{equation}
\label{log-derivative}
 g:= \frac{1}{2\pi i} \frac{f^{\prime}}{f}
\end{equation}
is a cusp form of weight $2$ on $\Gamma_0(N)$ with trivial character. Conversely, if one starts with such a cusp form $g$, then there
exists a GMF $f$ of weight $0$ on $\Gamma_0(N)$ with $\mrm{div}(f)=0$ such that~\eqref{log-derivative} is satisfied and $f$ is uniquely determined 
up to a non-zero scalar \cite{KM03}. Suppose that the Fourier expansion of $g(z)$ is given by
\begin{equation*}
g(z) = \sum_{n = 1}^{\infty} b(n) q^n.
\end{equation*}
Let $K_f$ (resp., $K_g$) be the field generated by the $q$-exponents $c(n)$ (resp.,  $b(n)$) of $f$
(resp., of $g$) over $\Q$. Then, $K_f=K_g$, since for $n \geq 1$, 
\begin{eqnarray}
\label{key-equation}
b(n) &= -{\sum}_{d|n} dc(d),\\
nc(n) &=- {\sum}_{d|n}\mu(d) b(n/d).
\end{eqnarray}

\section{Multiplicity one theorems}
\label{sign-section}
In this section, we state several  multiplicity one theorems for generalized modular functions,
in terms of their $q$-exponents. The basic idea of these proofs are motivated from the work of Inam and Wiese~\cite{IW}.

Now, we let us recall the Sato-Tate measure and the notion of natural density for subsets of $\mathbb{P}$,
where $\mathbb{P}$  denotes the set of all prime numbers.

\begin{dfn}
The Sato-Tate measure $\mu_{\mrm{ST}}$ is the probability measure on  $[-1,1]$ given by $\frac{2}{\pi} \sqrt{1-t^2} dt$. 
\end{dfn}
\begin{dfn}
Let $S$ be a subset of  $\mathbb{P}$. The set $S$ has  natural density $d(S)$
if the limit
\begin{equation}
\underset{x \ra \infty}{\mrm{lim}}\  \frac{\# \{ p \leq x: p\in S\}}{\# \{ p \leq x: p\in \mathbb{P}\}}
\end{equation}
exists and is equal to $d(S)$. 
\end{dfn}
\begin{remark}
$d(S)=0$ if $|S|<\infty$.
\end{remark}
The following theorem is a consequence of the Sato-Tate equidistribution theorem for non-CM Hecke eigenforms
and Theorem $2$ of~\cite{Mat12}.

\begin{thm}[Multiplicity one theorem-I]
\label{multiplication1thm}
Let $N_1, N_2 \geq 1$ be square-free integers. For $i=1,2$, let $f_i$ be a non-constant GMF of weight $0$ on $\Gamma_0(N_i)$ with $\mrm{div}(f_i)=0$
and $q$-exponents $\{c_i(n)\}_{n \in \N}$, resp., and suppose $g_i$'s (as in~\eqref{log-derivative}) are the corresponding
normalized Hecke eigenforms without CM.
If $c_1(p)$ and $c_2(p)$ have the same sign for every prime $p$ except those in a set $E$ with analytic density $\kappa \leq 6/25$,
then $N_1=N_2$ and $f_1= f_2$, up to a non-zero scalar. 
\end{thm}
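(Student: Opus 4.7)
The plan is to translate the hypothesis on the signs of the $q$-exponents $c_i(p)$ into a sign-agreement hypothesis on the Hecke eigenvalues $b_i(p)$ of the associated cusp forms $g_i$, and then invoke Matom\"aki's multiplicity-one result (Theorem~$2$ of~\cite{Mat12}). The bridge between the two is the divisor relation $b_i(n) = -\sum_{d \mid n} d\, c_i(d)$ recalled in~\S\ref{preliminaries}.

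\textbf{Step 1 (sign dictionary at primes).} Since each $g_i$ is normalized, $b_i(1) = 1$, so the relation at $n = 1$ forces $c_i(1) = -1$; at a prime $n = p$ it then gives
\[
b_i(p) \;=\; 1 - p\, c_i(p), \qquad \text{i.e.,} \qquad c_i(p) \;=\; \frac{1 - b_i(p)}{p}.
\]
Hence $c_i(p) > 0 \iff b_i(p) < 1$ and $c_i(p) < 0 \iff b_i(p) > 1$. In particular, $\mathrm{sign}(c_i(p))$ and $-\mathrm{sign}(b_i(p))$ can disagree only on primes with $0 < b_i(p) \leq 1$, and $c_i(p) = 0$ precisely when $b_i(p) = 1$.

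\textbf{Step 2 (the disagreement set has density zero).} By Deligne's bound $|b_i(p)| \leq 2\sqrt p$, the condition $0 < b_i(p) \leq 1$ corresponds to $b_i(p)/(2\sqrt p) \in (0,\, 1/(2\sqrt p)]$, a shrinking interval near $0$. Applying the Sato--Tate equidistribution theorem to the non-CM eigenform $g_i$ and approximating the indicator of the shrinking window by continuous test functions, the upper density of such primes is bounded by $\mu_{\mathrm{ST}}((0,\varepsilon])$ for every $\varepsilon > 0$, hence is zero. The same reasoning handles the primes with $b_i(p) = 1$. Consequently, outside a set of natural density at most $\kappa \leq 6/25$, the eigenvalues $b_1(p)$ and $b_2(p)$ have the same sign.

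\textbf{Step 3 (Matom\"aki and uniqueness of $f_i$).} Theorem~$2$ of~\cite{Mat12} then applies to the pair of non-CM normalized Hecke eigenforms $g_1, g_2$ of weight $2$ on $\Gamma_0(N_i)$ with square-free level: sign agreement of their prime Fourier coefficients outside a set of density $\leq 6/25$ forces $N_1 = N_2$ and $g_1 = g_2$. Finally, the uniqueness statement from~\cite{KM03} recalled in~\S\ref{preliminaries}---a weight-$0$ GMF with trivial divisor on $\Gamma_0(N)$ is determined up to a nonzero scalar by its logarithmic derivative---promotes $g_1 = g_2$ (on the common level) to $f_1 = f_2$ up to a nonzero scalar.

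The main obstacle I anticipate is the density bookkeeping in Step~2: one must verify that the qualitative Sato--Tate theorem suffices to control the shrinking window $(0, 1/(2\sqrt p)]$, and that the two density-zero exceptional sets (one per $g_i$) combine with $E$ without pushing the total density past $6/25$. Steps~1 and~3 are essentially direct: Step~1 is elementary divisor-sum manipulation using $b_i(1) = 1$, and Step~3 is a clean appeal to the cited theorem of Matom\"aki together with the uniqueness of a GMF with prescribed logarithmic derivative.
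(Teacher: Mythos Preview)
Your proposal is correct and follows essentially the same route as the paper: derive the sign dictionary from $b_i(p)=1-p\,c_i(p)$, use Sato--Tate to show that the primes where $\mathrm{sign}(c_i(p))\neq -\mathrm{sign}(b_i(p))$ have density zero, then invoke Matom\"aki's Theorem~2 and the uniqueness of a GMF from its logarithmic derivative. Your density bookkeeping in Step~2 is in fact slightly more explicit than the paper's, which tacitly absorbs the two density-zero exceptional sets into $E$.
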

\begin{proof}

Suppose that $g_i = \sum_{i=1}^{\infty} b_i(n)q^n$, for $i=1,2$.
 Since 
$  b_1(p)  =  1- pc_1(p) $, we have
\begin{equation*}
c_1(p)>0 \Longleftrightarrow  -1 \leq B_1(p) < \frac{1}{2 \sqrt{p}}, \quad c_1(p)<0 \Longleftrightarrow \frac{1}{2 \sqrt{p}} < B_1(p) \leq 1, 
\end{equation*}
where $B_1(p) = \frac{b_1(p)}{2\sqrt{p}}$, by definition. We see that,
if $c_1(p)$ (resp., $b_1(p)$) is positive, then it does not mean that $b_1(p)$  (resp., $c_1(p)$) is negative. So, the theorem is not an immediate
consequence of~\cite[Thm. 2]{Mat12}. 

However, we now show that, except possibly for a natural density zero set of primes,
the signs of $c_1(p)$ and $b_1(p)$ are exactly the opposite. To prove this, it is enough to show that
$$d\left(\left\lbrace p\ \mrm{prime}: p \nmid N, 0  \leq B_1(p) < \frac{1}{2 \sqrt{p}}\right\rbrace\right)=0.$$

For any fixed (but small) $\epsilon >0$, we have the following inclusion of sets 
$$ \left\lbrace p\leq x: p \nmid N,\  B_1(p)\in[0,\epsilon] \right\rbrace \supseteq 
\left\lbrace  p \leq x: p \nmid N,\ p> \frac{1}{4\epsilon^2},\ 0 \leq B_1(p) < \frac{1}{2\sqrt{p}} \right\rbrace.$$ 
Hence, we have
$$ \# \{p\leq x: p \nmid N,\  B_1(p)\in[0,\epsilon]\} + \pi\left(\frac{1}{4\epsilon^2}\right) \geq \# \{ p \leq x:  
p \nmid N,\  0 \leq B_1(p) < \frac{1}{2\sqrt{p}}\}.$$
Now divide the above inequality by $\pi(x)$
$$ \frac{\# \{p\leq x:  p \nmid N,\  B_1(p)\in[0,\epsilon]\}}{\pi(x)} + \frac{\pi\left(\frac{1}{4\epsilon^2}\right) }{\pi(x)} 
\geq \frac{\# \left\lbrace p \leq x: p \nmid N,\   0 \leq B_1(p) < \frac{1}{2\sqrt{p}} \right\rbrace}{\pi(x)}.$$
The term $\frac{\pi(\frac{1}{4\epsilon^2})}{\pi(x)}$ tends to zero as $x\ra \infty$ as $\pi(\frac{1}{4\epsilon^2})$ is finite.
By the Sato-Tate equidistribution theorem (\cite[Thm. B.]{BGHT11}), we have
$$ \frac{\# \{ p\leq x : p \nmid N,\ B_1(p)\in[0,\epsilon]    \} }{\pi(x)}  \lra \mu_{\mrm{ST}}([0,\epsilon]) \quad\ \mrm{as}\quad 	x \ra \infty. $$ 
This implies that 
\begin{equation}
\label{key-inequality-1}
\underset{x \ra \infty}{\mrm{lim\ sup}} \ \frac{\{p\leq x: p \nmid N, 0  \leq B_1(p) < \frac{1}{2 \sqrt{p}} \}}{\pi(x)} \leq \mu_{\mrm{ST}}([0,\epsilon ]).
\end{equation}
Since the inequality~\eqref{key-inequality-1} holds for all $\epsilon>0$, we have that 
$$ \underset{x \ra \infty}{\mrm{lim}} \ \frac{\{p\leq x : p \nmid N, 0  \leq B_1(p) < \frac{1}{2 \sqrt{p}} \}}{\pi(x)} = 0.$$

Now, if $c_1(p)$ and $c_2(p)$ have same sign for every prime $p$ except those in a set $E$ with analytic density $\kappa \leq 6/25$,
then $b_1(p)$ and $b_2(p)$ also have same sign for every prime $p \not \in E$, since the signs of $b_1(p)$ and $c_1(p)$ are 
exactly the opposite except possibly for a  natural density zero set of primes. By~\cite[Thm. 2]{Mat12}, we have that $N_1=N_2$ and $g_1=g_2$.
Since $g_i$'s determine $f_i$, up to a non-zero scalar, we have that $f_1=f_2$, up to a non-zero scalar.
\end{proof}

In particular, we have:
\begin{cor}
Let $f_1,f_2,g_1,g_2$ be as in the above Theorem. Suppose that $c_1(p)$ and $c_2(p)$ have the same sign for every prime $p$, then 
$f_1$ is equal to $f_2$, up to a non-zero scalar, i.e., signs of the prime $q$-exponents determine the GMF, up to a non-zero scalar.
\end{cor}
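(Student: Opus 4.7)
The plan is to deduce the corollary directly from Theorem~\ref{multiplication1thm} by taking the exceptional set $E$ to be empty. The hypothesis of the corollary says that $c_1(p)$ and $c_2(p)$ have the same sign for \emph{every} prime $p$, with no exceptions whatsoever. This is precisely the statement that the exceptional set $E$ of primes where the signs disagree satisfies $E = \emptyset$.

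Next, I would observe that the empty set trivially has analytic density $\kappa = 0$, and $0 \leq 6/25$, so the density hypothesis of Theorem~\ref{multiplication1thm} is automatically satisfied. The remaining hypotheses of the corollary (that $f_1,f_2$ are non-constant GMFs of weight $0$ on $\Gamma_0(N_i)$ with $\mathrm{div}(f_i)=0$ and that the associated $g_i$'s are normalized non-CM Hecke eigenforms) are inherited verbatim from Theorem~\ref{multiplication1thm}. Therefore, applying that theorem gives $N_1=N_2$ and $f_1=f_2$ up to a non-zero scalar, which is the desired conclusion.

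Since the corollary is really just the special case $E=\emptyset$, there is no genuine obstacle here; the whole content lies in Theorem~\ref{multiplication1thm}. The only thing worth making explicit is the interpretive statement: what the corollary really says is that the \emph{sign sequence} $(\mathrm{sgn}\, c(p))_{p\ \text{prime}}$ is a complete invariant of a non-constant GMF of weight $0$ on $\Gamma_0(N)$ (with squarefree $N$, $\mathrm{div}(f)=0$ and non-CM associated eigenform) up to a non-zero scalar multiple. I would close the proof with one sentence making that interpretation explicit, since this is the reason the corollary is being stated separately.
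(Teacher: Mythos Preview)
Your proposal is correct and matches the paper's approach: the corollary is stated immediately after Theorem~\ref{multiplication1thm} with the phrase ``In particular, we have,'' and no separate proof is given, confirming that it is simply the special case $E=\emptyset$ (hence $\kappa=0\leq 6/25$) of that theorem.
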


Now, we shall state a stronger version of the multiplicity one theorem for GMF's, by assuming the pair Sato-Tate conjecture for 
non-CM Hecke eigenforms of integral weight (Conjecture~\ref{Sato-Tate-conj}).  
Now, we let us recall the pair Sato-Tate equidistribution conjecture. 

Let $g_1 = \sum_{n=1}^{\infty} b_1(n)q^n$ (resp., $g_2 = \sum_{n=1}^{\infty} b_2(n)q^n$) be a normalized cuspidal eigenform 
of weight $2k$ on $\Gamma_0(N)$. By Deligne's bound, for $i=1,2$, one has 
$$ |b_i(p)| \leq 2 p^{k-\frac{1}{2}}, $$
and we let 
\begin{equation}
\label{Sato-Tate-normalization}
B_i(p):=\frac{b_i(p)}{2p^{k-\frac{1}{2}}} \in [-1,1].
\end{equation}
We have the following pair Sato-Tate equidistribution conjecture for distinct cuspidal eigenforms $g_1,g_2$. 
\begin{conj}
\label{Sato-Tate-conj}
Let $k \geq 1$ and let $g_1,g_2$ be distinct normalized cuspidal eigenforms of weight $2k$ on $\Gamma_0(N)$
without CM. For any two subintervals $I_1, I_2 \subseteq [-1,1]$, we have 
$$ d(S(I_1,I_2)) = \underset{x \ra \infty}{\mathrm{lim}}  \ \frac{\#S(I_1,I_2)(x)}{\pi(x)} = \mu_{\mrm{ST}}(I_1) \mu_{\mrm{ST}}(I_2)= 
\frac{4}{\pi^2} \int_{I_1}  \sqrt{1-s^2}  ds  \int_{I_2}  \sqrt{1-t^2}  dt,$$
where $S(I_1,I_2) :=  \left\lbrace p \in \mathbb{P}: p\nmid N,B_1(p) \in I_1, B_2(p) \in I_2 \right\rbrace,
S(I_1,I_2)(x):=  \left\lbrace p\leq x: p\in S(I_1,I_2) \right\rbrace$.
In other words, the Fourier coefficients at primes are independently Sato-Tate distributed.
\end{conj}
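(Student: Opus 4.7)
The plan is to adapt the Weyl-equidistribution / L-function strategy that underlies the single-variable Sato--Tate theorem of Barnet-Lamb--Geraghty--Harris--Taylor to the two-variable setting. By the Weyl equidistribution criterion applied to the product measure $\mu_{\mathrm{ST}} \otimes \mu_{\mathrm{ST}}$ on $[-1,1]^2$, the conjecture is equivalent to showing that for every pair of non-negative integers $(m,n) \neq (0,0)$,
$$\frac{1}{\pi(x)} \sum_{\substack{p \leq x \\ p \nmid N}} U_m(B_1(p))\, U_n(B_2(p)) \longrightarrow 0 \quad \text{as } x \to \infty,$$
where $U_m, U_n$ are the Chebyshev polynomials of the second kind, i.e.\ the characters of the symmetric-power representations of $\mathrm{SU}(2)$ evaluated at the Satake classes. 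Writing $B_i(p) = \cos \theta_i(p)$, the summand is the $p$-th trace of the external tensor product $\mathrm{Sym}^m(g_1) \boxtimes \mathrm{Sym}^n(g_2)$.

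The next step is to convert this Weyl sum into an analytic statement about an L-function. By a standard Tauberian / prime number theorem argument (of the type in Serre's appendix to Shahidi, or in Murty--Murty), the required asymptotic follows from holomorphy and non-vanishing on the line $\mathrm{Re}(s) = 1$ of the Rankin--Selberg L-function
$$L\bigl(s,\,\mathrm{Sym}^m(g_1) \times \mathrm{Sym}^n(g_2)\bigr).$$
When $m=0$ or $n=0$ this is a classical symmetric-power L-function whose analytic behaviour is known from the work of BGHT together with Newton--Thorne on symmetric-power functoriality. For $m,n \geq 1$, one needs that $\mathrm{Sym}^m(g_1)$ and $\mathrm{Sym}^n(g_2)$ are cuspidal automorphic representations of $\mathrm{GL}_{m+1}$ and $\mathrm{GL}_{n+1}$ (Newton--Thorne) and that they are not twist-equivalent, so that the Rankin--Selberg L-function is entire and non-vanishing at $s=1$ by Jacquet--Shalika.

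The main obstacle is the input from the Langlands program. One needs symmetric-power automorphy (or potential automorphy strong enough to give the required analytic continuation and boundary behaviour of Rankin--Selberg products) for both $g_1$ and $g_2$, \emph{and} a non-coincidence statement ruling out isomorphisms between the various $\mathrm{Sym}^m(g_1)$ and $\mathrm{Sym}^n(g_2)$. The automorphy part is available post Newton--Thorne for classical holomorphic eigenforms; the non-coincidence, via strong multiplicity one, reduces to showing no character twist of $g_1$ equals $g_2$, which, for distinct non-CM newforms of the same squarefree level, can be ruled out from the functional equations of the symmetric-power L-functions.

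Finally, one has to be careful to upgrade the Dirichlet-density statement coming most naturally out of the Tauberian argument to the natural-density statement written in the conjecture. This is routine once enough zero-free regions or convexity bounds are available for the L-functions in question, but it is the step that quantitatively controls the rate of convergence and is the reason the author chooses to state Conjecture~\ref{Sato-Tate-conj} as an assumption rather than invoke the (more delicate) unconditional packaging.
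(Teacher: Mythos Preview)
The statement you are addressing is labeled \texttt{conj} in the paper, not \texttt{thm}: it is the pair Sato--Tate equidistribution \emph{conjecture}, and the paper offers no proof of it whatsoever. It is stated purely as a hypothesis and is then assumed in Theorem~\ref{main-thm-1} and Theorem~\ref{multiplication1thm-1}. There is therefore no ``paper's own proof'' against which to compare your proposal.

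Your sketch is a reasonable outline of the Weyl-criterion/$L$-function strategy by which results of this type are (now, largely) established, and you correctly identify the automorphy of $\mathrm{Sym}^m g_i$ and the non-isomorphism of $\mathrm{Sym}^m g_1$ with twists of $\mathrm{Sym}^n g_2$ as the essential inputs. Two caveats, however. First, the paper predates Newton--Thorne, so the full symmetric-power functoriality you invoke was not available when the conjecture was formulated; this is exactly why the author states it as an assumption. Second, your handling of the ``non-coincidence'' step is too breezy: ruling out $\mathrm{Sym}^m g_1 \cong \mathrm{Sym}^n g_2 \otimes \chi$ for all $m,n,\chi$ does not reduce merely to $g_1$ and $g_2$ not being twists of each other (consider $m \neq n$, or the possibility that distinct forms share a common symmetric power up to twist); the clean argument goes through the Zariski closure of the image of the product Galois representation and requires an open-image/Mumford--Tate style input to see that the joint Sato--Tate group is $\mathrm{SU}(2)\times\mathrm{SU}(2)$ rather than a diagonal copy. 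Without that, the Rankin--Selberg $L$-function could acquire a pole and the Weyl sum would fail to vanish.

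In short: there is no gap relative to the paper because the paper proves nothing here; your write-up is a plausible roadmap toward the conjecture but is not itself a proof, and the non-coincidence step in particular would need substantial work to make rigorous.
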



For notational convenience, we let $\mathbb{P}_{<0}$ denote 
the set $\{ p \in \mathbb{P}: p\nmid N,\ c_1(p)c_2(p)<0 \}$, and similarly $\mathbb{P}_{>0}$, $\mathbb{P}_{\leq 0}$, $\mathbb{P}_{\geq 0}$, 
and $\mathbb{P}_{=0}$.  

\label{sign changes}
\begin{thm}[Non-CM case]
\label{main-thm-1}
For $i=1,2$, let $f_i$ be a non-constant GMF of weight $0$ on $\Gamma_0(N)$ with $\mrm{div}(f_i)=0$
and $q$-exponents $\{c_i(n)\}_{n \in \N}$, resp., and suppose $g_i$'s (as in~\eqref{log-derivative}) are the corresponding
normalized Hecke eigenforms without CM. If the pair Sato-Tate conjecture (Conj.~\ref{Sato-Tate-conj}) holds for the pair $(g_1,g_2)$,
then the product of $q$-exponents $c_1(p) c_2(p)$($p$ prime), change signs infinitely often.
Moreover, the sets $$\mathbb{P}_{>0}, \mathbb{P}_{<0}, \mathbb{P}_{\geq 0}, \mathbb{P}_{\leq 0}$$
have natural density $1/2$,
and $d(\mathbb{P}_{=0}) = 0$.
\end{thm}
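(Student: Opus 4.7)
The plan is to reduce the statement to a direct application of the pair Sato-Tate conjecture (Conjecture~\ref{Sato-Tate-conj}), using the sign dictionary between $c_i(p)$ and $B_i(p)$ already set up in the proof of Theorem~\ref{multiplication1thm}.

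First, I would recall from $b_i(p) = 1 - p c_i(p)$ that, for $i=1,2$,
$$c_i(p)>0 \Longleftrightarrow B_i(p) < \tfrac{1}{2\sqrt{p}}, \quad c_i(p)<0 \Longleftrightarrow B_i(p) > \tfrac{1}{2\sqrt{p}}, \quad c_i(p)=0 \Longleftrightarrow B_i(p) = \tfrac{1}{2\sqrt{p}}.$$
The single-form Sato-Tate argument from the proof of Theorem~\ref{multiplication1thm}, applied separately to $g_1$ and to $g_2$, shows that the transition strips $\{p \nmid N : 0 \le B_i(p) < 1/(2\sqrt{p})\}$ have natural density zero. Hence, outside a density-zero exceptional set of primes, $\mrm{sign}(c_i(p)) = -\mrm{sign}(B_i(p))$ and $c_i(p) \neq 0$.

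Next, for each of the four choices $I_1, I_2 \in \{(-1,0),(0,1)\}$, the pair Sato-Tate conjecture yields
$$d\{p : B_1(p) \in I_1,\ B_2(p) \in I_2\} = \mu_{\mrm{ST}}(I_1)\,\mu_{\mrm{ST}}(I_2) = \tfrac{1}{4},$$
since $\mu_{\mrm{ST}}((-1,0)) = \mu_{\mrm{ST}}((0,1)) = 1/2$. Summing over the two same-sign and two opposite-sign combinations gives $d\{p : B_1(p)B_2(p) > 0\} = d\{p : B_1(p)B_2(p) < 0\} = 1/2$. Through the sign dictionary of the previous paragraph, these immediately translate to $d(\mathbb{P}_{>0}) = d(\mathbb{P}_{<0}) = 1/2$, since opposite signs of $(B_1(p), B_2(p))$ correspond to opposite signs of $(c_1(p), c_2(p))$ away from a density-zero set (and likewise for same signs). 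For $d(\mathbb{P}_{=0}) = 0$, note that $c_i(p) = 0$ forces $|B_i(p)| \le 1/(2\sqrt{p})$, and for any fixed $\epsilon > 0$ the set $\{p : |B_i(p)| \le \epsilon\}$ has density $\mu_{\mrm{ST}}([-\epsilon,\epsilon])$ by single-form Sato-Tate; letting $\epsilon \to 0$ forces the density to vanish. The densities of $\mathbb{P}_{\geq 0}$ and $\mathbb{P}_{\leq 0}$ then follow by additivity, and infinitely many sign changes of $c_1(p)c_2(p)$ is an immediate consequence of $d(\mathbb{P}_{>0}), d(\mathbb{P}_{<0}) > 0$.

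The only real obstacle is conceptual rather than computational: the pair Sato-Tate conjecture controls the distribution of $(B_1(p), B_2(p)) \in [-1,1]^2$, whereas the theorem is phrased in terms of $c_i(p)$, and the two sign conventions are separated by the shrinking boundary $B_i(p) = 1/(2\sqrt{p})$. Confirming that this boundary sits in a set of natural density zero is precisely the single-form Sato-Tate computation borrowed from Theorem~\ref{multiplication1thm}; once this is in hand, the rest of the argument is a clean book-keeping application of Conjecture~\ref{Sato-Tate-conj}.
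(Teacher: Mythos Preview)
Your proposal is correct and follows essentially the same approach as the paper: translate signs of $c_i(p)$ into the location of $B_i(p)$ relative to the shrinking threshold $1/(2\sqrt{p})$, show the boundary strip has density zero, and then read off the quadrant densities from the pair Sato-Tate conjecture. The only organisational difference is that you first dispose of the boundary strip via the single-form Sato-Tate computation borrowed from Theorem~\ref{multiplication1thm} and then apply Conjecture~\ref{Sato-Tate-conj} to clean quadrants, whereas the paper folds the boundary handling into a direct $\epsilon$-inclusion and $\liminf/\limsup$ squeeze using only the pair conjecture; the underlying content is the same.
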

We let $\pi_{<0}(x)$ denote the number 
$ \# \{p\leq x: p \in \mathbb{P}_{<0}\}$, and similarly for 
$\pi_{>0}(x), \pi_{\leq 0}(x)$, $\pi_{\geq 0}(x)$, and $\pi_{=0}(x)$. 

\begin{proof}[\textbf{Proof of Theorem~\ref{main-thm-1}}]
Since $b_i(p)  =  1- pc_i(p)$, we have
\begin{equation*}
c_i(p)>0 \Longleftrightarrow  -1 \leq B_i(p) < \frac{1}{2 \sqrt{p}}, \quad c_i(p)<0 \Longleftrightarrow \frac{1}{2 \sqrt{p}} < B_i(p) \leq 1, 
\end{equation*}
where $B_i(p) = \frac{b_i(p)}{2\sqrt{p}}$, by definition, for $i=1,2$. 
First, we shall show that
$$\underset{x \ra \infty}{\mrm{lim\ inf}} \ \frac{\pi_{< 0}(x)}{\pi(x)} \geq \mu_{\mrm{ST}}([0,1])=\frac{1}{2}.$$

For any fixed (but small) $\epsilon >0$, we have the following inclusion of sets
$$ \{p\leq x: p \nmid N,\  c_1(p)c_2(p)<0\} \supseteq S_{\frac{1}{4\epsilon^2}}([\epsilon,1],[-1,0])(x) \cup S_{\frac{1}{4\epsilon^2}}([-1,0],[\epsilon,1])(x),$$
where $S_a(I_1, I_2)(x):= \{p \in S(I_1,I_2)(x) : p>a \},$ for any $a \in \R^{+}$.
Hence, we have
$$ \# \{p\leq x: p \nmid N,\ c_1(p)c_2(p)<0\} + \pi \left(\frac{1}{4\epsilon^2}\right) \geq \# S([\epsilon,1],[-1,0])(x) + \# S([-1,0],[\epsilon,1])(x).$$
Now divide the above inequality by $\pi(x)$
$$ \frac{\# \{p\leq x:  p \nmid N,\ c_1(p)c_2(p)<0\}}{\pi(x)} + \frac{\pi\left(\frac{1}{4\epsilon^2}\right) }{\pi(x)} 
\geq \frac{\# S([\epsilon,1],[-1,0])(x) + \# S([-1,0],[\epsilon,1])(x)}{\pi(x)}.$$
The term $\frac{\pi(\frac{1}{4\epsilon^2})}{\pi(x)}$ tends to zero as $x\ra \infty$ as $\pi(\frac{1}{4\epsilon^2})$ is finite.
By Conjecture~\ref{Sato-Tate-conj}, we have
$$ \frac{\# S([\epsilon,1],[-1,0])(x) + \# S([-1,0],[\epsilon,1])(x) \} }{\pi(x)}  \lra 2. \mu_{\mrm{ST}}([\epsilon, 1])\mu_{\mrm{ST}}([-1,0]) \quad\ \mrm{as}\quad 	x \ra \infty. $$ 
This implies that 
\begin{equation}
\label{key-inequality}
\underset{x \ra \infty}{\mrm{lim\ inf}} \ \frac{\pi_{<0}(x)}{\pi(x)} \geq 2. \mu_{\mrm{ST}}([\epsilon, 1])\mu_{\mrm{ST}}([-1,0])=\mu_{\mrm{ST}}([\epsilon, 1]),
\end{equation}
where $\pi_{<0}(x)=\# \{p\leq x:p \nmid N,\ c_1(p)c_2(p)<0\}$ by definition. Since the inequality~\eqref{key-inequality} holds for all $\epsilon>0$, we have that 
$$ \underset{x \ra \infty}{\mrm{lim\  inf}} \ \frac{\pi_{<0}(x)}{\pi(x)} \geq \mu_{\mrm{ST}}([0,1]) = \frac{1}{2}.$$
A similarly proof  shows that
$\underset{x \ra \infty}{\mrm{lim\ inf}} \ \frac{\pi_{\leq 0}(x)}{\pi(x)} \geq \frac{1}{2}.$
Since $\pi_{> 0}(x) = \pi(x) - \pi_{\leq 0}(x)$, we have that
$\underset{x \ra \infty}{\mrm{lim\ sup}} \ \frac{\pi_{> 0}(x)}{\pi(x)} \leq \frac{1}{2}.$
Hence, the limit  $ \underset{x \ra \infty}{\mrm{lim}} \frac{\pi_{<0}(x)}{\pi(x)}$ exists 
and is equal to $\frac{1}{2}$. Therefore, the natural density of the set $\mathbb{P}_{<0}$ is $\frac{1}{2}$. 

Similarly, one can also argue for the sets $\mathbb{P}_{>0}$, $\mathbb{P}_{ \leq 0}$, and $\mathbb{P}_{ \geq 0}$, 
and show that the natural densities of these sets are $\frac{1}{2}$. The claim for $\mathbb{P}_{=0}$ follows from 
the former statements.
\end{proof}
Finally, we state another, but a stronger, version of Theorem~\ref{multiplication1thm},
assuming the Conjecture~\ref{Sato-Tate-conj}. Observe that, in this version, we dont require that $N$ to be square-free.
\begin{thm}[Multiplicity one theorem-II]
\label{multiplication1thm-1}
For $i=1,2$, let $f_i$ be a non-constant GMF of weight $0$ on $\Gamma_0(N)$ with $\mrm{div}(f_i)=0$
and $q$-exponents $\{c_i(n)\}_{n \in \N}$, resp., and suppose $g_i$'s (as in~\eqref{log-derivative}) are the corresponding
normalized Hecke eigenforms without CM. Suppose that the Conjecture~\ref{Sato-Tate-conj} holds for the pair $(g_1,g_2)$.
If $c_1(p)$ and $c_2(p)$ have the same sign for every prime $p$ except those in a set $E$ with analytic density $\kappa < 1/2$,
then $f_1= f_2$, up to a non-zero scalar. 
\end{thm}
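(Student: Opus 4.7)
The plan is to deduce this theorem from Theorem~\ref{main-thm-1} by a straightforward contradiction argument. First, I would suppose towards contradiction that $f_1$ is not a non-zero scalar multiple of $f_2$. By the correspondence recalled in~\eqref{log-derivative}, where the associated weight-$2$ cusp form $g_i$ determines $f_i$ up to a non-zero scalar, this assumption is equivalent to $g_1 \neq g_2$. Thus the pair $(g_1, g_2)$ consists of distinct normalized Hecke eigenforms on $\Gamma_0(N)$ without CM, for which Conjecture~\ref{Sato-Tate-conj} is assumed, and hence the hypotheses of Theorem~\ref{main-thm-1} are satisfied.

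Applying Theorem~\ref{main-thm-1}, I obtain that the set
$$\mathbb{P}_{<0} = \{p \in \mathbb{P} : p \nmid N,\ c_1(p) c_2(p) < 0\}$$
has natural density $1/2$. Since the existence of a natural density implies the existence of a Dirichlet (analytic) density with the same value, the analytic density of $\mathbb{P}_{<0}$ also equals $1/2$. On the other hand, if $p \in \mathbb{P}_{<0}$, then exactly one of $c_1(p), c_2(p)$ is strictly positive and the other strictly negative, so they do not share a common sign; by hypothesis this forces $p \in E$. Hence $\mathbb{P}_{<0} \subseteq E$, and by monotonicity of analytic density, the analytic density of $\mathbb{P}_{<0}$ is at most $\kappa < 1/2$, contradicting the previous computation. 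This forces $g_1 = g_2$, and therefore $f_1 = f_2$ up to a non-zero scalar.

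Since the argument simply packages Theorem~\ref{main-thm-1} with two elementary density facts, I do not expect a substantial obstacle. The only points requiring a line of justification are the passage from natural to analytic density (standard, and used in the correct direction here), and the convention that a zero value has neither positive nor negative sign, which is what makes the inclusion $\mathbb{P}_{<0} \subseteq E$ hold unconditionally. It is worth noting, however, why the hypothesis on $N$ being square-free can be dropped in this version compared to Theorem~\ref{multiplication1thm}: there we appealed to~\cite[Thm.~2]{Mat12} whose statement required square-freeness of the level, whereas here the only analytic input is the pair Sato-Tate conjecture, which imposes no such restriction.
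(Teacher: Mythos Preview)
Your proposal is correct. The paper states Theorem~\ref{multiplication1thm-1} without proof, leaving it as an immediate consequence of Theorem~\ref{main-thm-1}; your argument is precisely the intended one-line deduction (contrapose, apply Theorem~\ref{main-thm-1} to the distinct pair $(g_1,g_2)$, and observe $\mathbb{P}_{<0}\subseteq E$ forces $\tfrac{1}{2}\le\kappa$), and your remarks on why square-freeness of $N$ is no longer needed match the paper's own comment preceding the statement.
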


\section{Nature of values taken by $c(p)$($p\ \mrm{prime}$)}
\label{natureofvalues}
In this section, we shall also make a general statement about the nature of values that the prime $q$-exponents of a GMF can take.
Kohnen and Meher showed that $q$-exponents $c(p)$($p$\ prime) take infinitely many values.
More precisely, they showed that
\begin{thm}[\cite{KM11}]
\label{main-thm}
Let $f$ be a non-constant GMF of weight $0$ on $\Gamma_0(N)$ with $div(f) = 0$, $q$-exponents $c(n)(n\in \N)$. 
Suppose that $g$ $($as in~\eqref{log-derivative}$)$ is a normalized Hecke eigenform.
Then the $c(p)(p\ \mrm{prime})$ take infinitely many different values. 
\end{thm}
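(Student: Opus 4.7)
The plan is to argue by contradiction: if the prime $q$-exponents $c(p)$ take only finitely many values, then I want to show that almost all of them must in fact be zero, which translates into a rigid condition on the Fourier coefficients of $g$ that no normalized Hecke eigenform can satisfy.

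First, I extract the relation between $c(p)$ and $b(p)$ at primes. The divisor formula $b(n) = -\sum_{d \mid n} d c(d)$ of the excerpt, evaluated at $n = p$, gives $b(p) = -c(1) - p c(p)$. The normalization $b(1) = 1$ (built into $g$ being a normalized Hecke eigenform) together with the same formula at $n = 1$ forces $c(1) = -1$, hence $b(p) = 1 - p c(p)$, equivalently $c(p) = (1 - b(p))/p$. Since $g$ is a cuspidal Hecke eigenform of weight $2$, Deligne's bound $|b(p)| \leq 2\sqrt{p}$ yields $|c(p)| \leq (1 + 2\sqrt{p})/p \to 0$. Thus the sequence $(c(p))$ accumulates only at $0$, and if its range is assumed to be a finite set $\{\alpha_1, \ldots, \alpha_k\}$, then necessarily $0 \in \{\alpha_1, \ldots, \alpha_k\}$ and $c(p) = 0$ for all sufficiently large $p$; equivalently, $b(p) = 1$ for all but finitely many primes.

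The main obstacle is ruling out this last possibility, and this is where the nontrivial input enters. I would appeal to the standard fact that, for any normalized Hecke eigenform $g$ and any fixed algebraic number $\alpha$, the set $\{p \in \mathbb{P} : b(p) = \alpha\}$ has natural density zero. In the non-CM case this is immediate from the Sato-Tate equidistribution theorem already used earlier in the paper, and in general it follows from the Chebotarev-type density results that Serre derived from the theory of $\ell$-adic Galois representations attached to modular forms. Since our assumption would make the set $\{p : b(p) = 1\}$ of natural density $1$, we obtain the desired contradiction, completing the proof. Any argument for the theorem seems to require some such equidistribution or Galois-theoretic input at this final step; the rest is the elementary chain of implications above.
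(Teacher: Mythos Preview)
Your argument is correct. Note, however, that the paper does not give its own proof of this theorem: it is quoted from \cite{KM11} as background, and the paper then proves the sharper Proposition~\ref{main-prop}. So there is no line-by-line comparison to make with a proof of Theorem~\ref{main-thm} in the paper itself.

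That said, your approach is essentially the same engine the paper uses for Proposition~\ref{main-prop}. Both arguments rest on the relation $c(p)=(1-b(p))/p$ together with Deligne's bound $|b(p)|\le 2\sqrt{p}$, which forces $c(p)\to 0$ and hence makes any fixed nonzero value occur for only finitely many primes. The only difference is in how the contradiction is closed. You conclude that $b(p)=1$ for all but finitely many primes and then invoke the density-zero of the level set $\{p:b(p)=1\}$ (via Sato--Tate in the non-CM case, or Serre's Chebotarev-type results in general). The paper, proving the stronger statement that $c(p)$ takes infinitely many \emph{positive} values, instead concludes that $\{p:c(p)>0\}$ would be finite and contradicts this with $d(\{p:c(p)>0\})=\tfrac12$ from \cite{Kum12}. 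Your endgame is slightly more self-contained in that it does not need \cite{Kum12}; conversely, the paper's version immediately yields the refined sign information. Either route is perfectly adequate for the statement as written.
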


In the above result, they did not mention anything about the nature of values that these exponents can take. 
However, using the recent results of~\cite{Kum12}, one can have a precise information about the nature
of values that $c(p)$($p$ prime) can take.  


\begin{prop}
\label{main-prop}
Suppose $f,g$ be as in Theorem~\ref{main-thm}. 
Then $c(p)(p\  \mrm{prime})$ take 
infinitely many (distinct) positive values. Moreover, almost all these positive values are non-integral.
\end{prop}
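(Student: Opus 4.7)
The plan is to prove the proposition in two steps: (i) $\{c(p) : p \text{ prime}\}$ contains infinitely many distinct positive values, and (ii) at most finitely many of these positive values are integers. Both steps rest on the identity $c(p) = (1 - b(p))/p$ extracted from \eqref{key-equation} (already used in the proof of Theorem~\ref{multiplication1thm}), together with Deligne's bound $|b(p)| \leq 2\sqrt{p}$ and \cite[Thm.~5.1]{Kum12}.

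For step (i), I would first show that the set $\{p \text{ prime}: c(p) > 0\}$ is infinite, by rewriting $c(p)>0$ as $b(p)<1$ and then reducing to the existence of infinitely many $p$ with $b(p) \leq 0$. When $g$ has no CM this is immediate from the Sato-Tate equidistribution theorem \cite{BGHT11}, which in fact gives this set natural density $1/2$. When $g$ has CM, the primes $p$ inert in the associated CM field (a set of density $1/2$) satisfy $b(p)=0$ outright, so $c(p)=1/p>0$ there, and this subcase already produces infinitely many distinct positive values directly. To upgrade the non-CM case from ``infinitely many $p$ with $c(p)>0$'' to ``infinitely many distinct positive values of $c(p)$'', I would argue by contradiction: if only finitely many positive values $v_1,\ldots,v_k$ were attained, then each such $p$ would satisfy $b(p) = 1 - p v_i$ for some $i$, and Deligne's bound would force $v_i \leq (1+2\sqrt{p})/p \to 0$, so each fixed $v_i>0$ could occur only for boundedly many $p$, contradicting infinitude.

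Step (ii) is then a direct appeal to \cite[Thm.~5.1]{Kum12}, which asserts that $c(n) \in \Z \setminus \{0\}$ for only finitely many $n \in \N$. Restricting $n$ to primes and to the positive values of $c(p)$, only finitely many of the (infinitely many) distinct positive values can be integers, so almost all are non-integral, which is the desired conclusion.

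I expect the main obstacle to be the quantitative half of step (i) in the non-CM case: Sato-Tate supplies infinitude of $\{p : c(p)>0\}$ for free, but extracting infinitely many \emph{distinct} values requires the Deligne-based tail estimate sketched above rather than any naive counting argument. The CM case, by contrast, is transparent because $1/p$ is visibly positive, non-integral, and distinct across primes, so no separate distinctness argument is needed there.
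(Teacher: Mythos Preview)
Your proposal is correct and follows essentially the same route as the paper: split into the CM case (where $c(p)=1/p$ along a density-$1/2$ set of primes handles everything at once) and the non-CM case (where one assumes only finitely many positive values are attained, uses Deligne's bound to see each value can occur for only finitely many $p$, and contradicts the fact that $\{p:c(p)>0\}$ has density $1/2$), then invokes \cite[Thm.~5.1]{Kum12} for the non-integrality. The only cosmetic difference is that the paper cites \cite[Thm.~2.6]{Kum12} for the density statement rather than appealing to Sato--Tate directly.
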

\begin{proof}
Since the character of $g$ is trivial, we see that $K_g$ is totally real, in particular $K_g \subset \R$. 
Hence, all $c(p)(p\  \mrm{prime})$ are real numbers, because $K_f=K_g$. If $g = \sum_{n=1}^{\infty} b(n)q^n$, with $b(1)=1$, has CM,
then $b(p)=0$ for infinitely many primes $p$. Hence, $c(p) =\frac{1}{p}$ for those infinitely many primes $p$ and this proves the proposition.
Hence, WLOG, let us assume that $g$ is without CM.

Now, suppose that the proposition is not true.  Let $a_1, \ldots, a_r$ be the only positive real numbers taken by $c(p)(p\  \mrm{prime})$.
Observe that
$$\{ p \in \mathbb{P} |\ c(p)>0 \} = \cup_{i=1}^{r} S_{a_i}, $$
where $S_{a_i} = \{ p \in \mathbb{P} |\ c(p)= a_i \}.$ 
It follows that the set $S_{a_i}$ is finite, since $$ \frac{1}{p}-\frac{2}{\sqrt{p}} \leq c(p)=a_i \leq     \frac{1}{p}+\frac{2}{\sqrt{p}}.$$ 
This implies that the natural density of $\{ p \in \mathbb{P} |\ p \nmid N,  c(p)>0 \})$ is zero, which 
is a contradiction, by~\cite[Thm. 2.6]{Kum12}. Moreover, all these values have to be non-integral, 
i.e., they don't belong to $\mcO_{K_f}$, by~\cite[Thm. 5.1]{Kum12}.
\end{proof}
Similarly, one can also show that $c(p)(p\  \mrm{prime})$ take infinitely many (distinct) negative values,
which are almost all non-integral, if $g$ does not have CM.

\section{Remarks on $q$-exponents $c(n)(n \in \N)$ of GMF's}
\label{first sign change}
In this final section, we make some remarks about the general $q$-exponents of GMF's. Especially, 
we shall study the integrality of general $q$-exponents of a GMF and give an upper bound on the first sign change of these $q$-exponents. 

The following proposition can be thought of as a generalization of Theorem $5.1$ in~\cite{Kum12}.
Now, we show:
\begin{prop}
\label{corollary-simple}
Let $f$ be a non-constant GMF of weight $0$ on $\Gamma_0(N)$ with $div(f) = 0$ with rational $q$-exponents $c(n)(n \in \N)$.
Suppose that $g$ $($as in~\eqref{log-derivative}$)$ is a normalized Hecke eigenform. Then, $c(n)$'s 
are non-zero and integral for only finitely many $n$, i.e., there exists a $N_0(f) \in \N$ such that 
$c(n)$ is non-integral, if non-zero, for all $n \geq N_0(f)$.
\end{prop}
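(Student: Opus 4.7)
The plan is to use the Möbius-inverted formula
\[
nc(n) = -\sum_{d\mid n}\mu(d)\,b(n/d)=:S(n)
\]
from~\eqref{key-equation} to express $c(n)$ as $S(n)/n$, and then to combine the integrality of $S(n)$ with Deligne's bound to force $|S(n)|<n$ for $n\gg 0$. Once this is done the conclusion is immediate: if $c(n)\ne 0$, then $S(n)$ is a non-zero integer of absolute value strictly less than $n$, so $c(n)=S(n)/n$ is a non-zero rational of absolute value less than $1$, hence non-integral.

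The first step is to verify that $S(n)\in\Z$ for every $n$. By hypothesis all $c(m)$ lie in $\Q$, so $K_f=\Q$; by the identity $K_f=K_g$ recorded in \S\ref{preliminaries} this forces every $b(m)\in\Q$. Because $g$ is a normalized Hecke eigenform of weight~$2$ on $\Gamma_0(N)$ with trivial character, its Fourier coefficients are algebraic integers, so $b(m)\in\Z$ and therefore $S(n)\in\Z$. In particular, $c(n)=S(n)/n$ is already a rational number whose integrality is controlled by whether $n\mid S(n)$.

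The second step is the size estimate. Deligne's bound for weight~$2$ eigenforms gives $|b(m)|\le d(m)\sqrt{m}$, where $d(\cdot)$ denotes the divisor function. Restricting the sum defining $S(n)$ to square-free divisors (those with $\mu(d)\ne 0$), one obtains
\[
|S(n)| \le \sum_{d\mid n}d(n/d)\sqrt{n/d} \le \sqrt{n}\sum_{d\mid n}d(n/d),
\]
and the right-hand side is $O(n^{1/2+\varepsilon})$ for every $\varepsilon>0$ by the standard bound $d_3(n)=O(n^{\varepsilon})$. Consequently there exists $N_0(f)\in\N$, depending only on $f$, such that $|S(n)|<n$ for all $n\ge N_0(f)$, which together with the first step finishes the proof.

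The argument is largely a bookkeeping exercise; the only subtlety is ensuring that the rationality hypothesis on the $c(n)$ really upgrades to integrality of the $b(n)$, which is the reason one needs $g$ to be a Hecke eigenform. No step beyond the classical Deligne bound and elementary divisor-sum estimates is required, so I do not anticipate any substantial obstacle.
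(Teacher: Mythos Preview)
Your proof is correct and follows essentially the same approach as the paper: both use the M\"obius-inverted relation $nc(n)=-\sum_{d\mid n}\mu(d)b(n/d)$, the integrality of the $b(m)$ (hence of $nc(n)$), Deligne's bound $|b(m)|\le\sigma_0(m)\sqrt{m}$, and the estimate $\sigma_0(n)=o(n^\varepsilon)$ to conclude $|nc(n)|<n$ for all large $n$. Your execution is in fact slightly more direct: the paper derives the same bound $|nc(n)|\le\sqrt{n}\,\sigma_0(n)^2$ but then, rather than concluding immediately, re-expresses $nc(n)$ via the relation $b(n)=-\sum_{d\mid n}dc(d)$ and bounds it a second time to obtain $n\le 2\sqrt{n}\,\sigma_0(n)^3$; your argument skips this detour.
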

\begin{proof}
By~\eqref{key-equation}, we know that $nc(n)$'s are integral for all $n \in \N$.  
However, it may not be  that $c(n)$'s are itself integral. We confirm this by showing that
$c(n)$'s are integral only finitely many times, unless they are zero.

Suppose that the proposition is not true. Let $S$ be an infinite subset of $\N$ such that $c(n) \in \Z-\{0\}$, for all $n \in S$.
Recall that $g(z) = \sum_{n=1}^{\infty} b(n) q^n$ with $b(1)=1$. 
Therefore, $n$ divides $b(n)+\sum_{d \mid n, d<n} dc(d)$ in $\Z$, for all $n$ in $S$.
Now, we show that this cannot happen.  

Since $|b(n)| \leq \sigma_0(n) \sqrt{n}$ and $nc(n)=-\sum_{d \mid n} \mu(d) b(n/d)$, we have
\begin{equation}
|nc(n)| \leq  \sum_{d|n} |b(n/d)| \leq \sum_{d|n} \sigma_0(n/d) \sqrt{n/d}  
          \leq \sqrt{n} \sum_{d|n} \sigma_0(n/d)  \leq  \sqrt{n} \sigma_0(n)^2.
\end{equation}

If $c(n) \in \Z-\{0\}$, then 
$n$ divides $b(n)+\sum_{d \mid n, d<n} dc(d)$ in $\Z$, then 
$$ n \leq |b(n)+\sum_{d \mid n, d<n} dc(d)| \leq \sigma_0(n)\sqrt{n} + \sum_{d \mid n, d<n} \sqrt{d} \sigma_0(d)^2 \leq 2 \sqrt{n} \sigma_0(n)^3.$$
This implies that
\begin{equation}
\label{integral-exponents}
 n \leq 2\sqrt{n} \sigma_0(n)^3.
\end{equation}
Since $\sigma_0(n) = o(n^{\epsilon})$ for all $\epsilon >0$,
the above inequality can only hold for finitely many $n$'s, hence $S$ is a finite set. Therefore, $c(n)$ is non-zero and integral for only finitely many $n$'s.
\end{proof}

The following corollary, which gives an another proof of~\cite[Thm. 1]{KM08-N}.
\begin{cor}[Kohnen-Mason]
\label{exponent-thm-3}
Let $f =\sum_{n=0}^{\infty} a(n) q^n$ be a GMF of weight $0$ and level $N$ with $div(f) = 0$.
Suppose $g$ $($as in~\eqref{log-derivative}$)$ is a normalized Hecke eigenform. 
If $a(0)=1$ and $a(n)\in \Z$ for $n \in \N$, then $f = 1$ is constant.
\end{cor}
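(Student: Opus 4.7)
The plan is to argue by contradiction. Since $a(0)=1$, assuming $f\neq 1$ is the same as assuming $f$ is non-constant, so Proposition~\ref{corollary-simple} applies. Under this assumption, the integrality of the additive coefficients $a(n)$ will upgrade to integrality of the multiplicative exponents $c(n)$; Proposition~\ref{corollary-simple} will then force $c(n)=0$ for $n$ large, which will contradict the Hecke-eigenform structure of $g$.

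I would first show $c(n)\in\Z$ for every $n\geq 1$ by induction on $n$. Assume $c(1),\dots,c(n-1)\in\Z$. Because $(1-q^k)^{\pm c(k)}$ lies in $1+q^k\Z[[q]]$ for every integer exponent (a polynomial when the exponent is non-negative, a negative-binomial series with positive integer coefficients otherwise), the series
$$ f_{n-1}\;:=\;f\cdot\prod_{k=1}^{n-1}(1-q^k)^{-c(k)} $$
has integer coefficients. By~\eqref{infinite-expansion} with $c_0=1$, $h=0$, it also equals $\prod_{k\geq n}(1-q^k)^{c(k)}=1-c(n)q^n+O(q^{n+1})$, so in particular it lies in $1+q^n\C[[q]]$. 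Reading off the coefficient of $q^n$ then gives $c(n)\in\Z$.

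With $c(n)\in\Z$ for all $n$, the phrase \emph{non-zero and integral} in Proposition~\ref{corollary-simple} collapses to just \emph{non-zero}, so there exists $N_0\in\N$ with $c(n)=0$ for all $n>N_0$. Specialising~\eqref{key-equation} to $n=p$ prime gives $b(p)=-p\,c(p)$, hence $b(p)=0$ for every prime $p>N_0$. For the contradiction I would combine this with Hecke multiplicativity at $p^2$: since $g$ is a normalised weight-$2$ cuspidal eigenform, the relation $b(p^2)=b(p)^2-p\,b(1)$ yields $b(p^2)=-p$ for every $p>N_0$, while applying~\eqref{key-equation} directly to $n=p^2$ (take $p>\sqrt{N_0}$, so that both $c(p)$ and $c(p^2)$ vanish) gives $b(p^2)=-c(1)$. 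Equating the two forces $c(1)=p$ for arbitrarily large primes $p$, which is absurd.

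The main obstacle is the first step, propagating integrality from the $a(n)$ to the $c(n)$; this is essentially the classical fact that any element of $1+q\Z[[q]]$ admits a unique Euler-product factorisation $\prod(1-q^n)^{c(n)}$ with integer exponents, and everything comes down to choosing the right inductive normalisation. Once that is in place, Proposition~\ref{corollary-simple} is quoted off the shelf and the final contradiction at $p^2$ is a one-line Hecke computation, so no further input is required.
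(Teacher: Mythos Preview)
Your overall strategy matches the paper's: assume $f\neq 1$, deduce that all $c(n)$ are integers, invoke Proposition~\ref{corollary-simple} to force $c(n)=0$ for all large $n$, and then derive a contradiction. The induction showing $c(n)\in\Z$ is exactly the argument the paper imports from~\cite{KM08-N}, so that part is fine.

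There is, however, a computational slip in your final step. Equation~\eqref{key-equation} at $n=p$ reads $b(p)=-c(1)-p\,c(p)$, not $b(p)=-p\,c(p)$; you have dropped the $d=1$ term of the divisor sum. Since $g$ is normalized, $c(1)=-b(1)=-1$, so for $p>N_0$ one gets $b(p)=1$, not $b(p)=0$. The Hecke relation then gives $b(p^2)=1-p$, while~\eqref{key-equation} at $n=p^2$ (for $p>N_0$, which also forces $p^2>N_0$; your condition $p>\sqrt{N_0}$ is not quite enough to kill $c(p)$) gives $b(p^2)=-c(1)=1$. Equating yields $p=0$, still absurd. So the argument survives once the arithmetic is corrected.

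Where your proof genuinely differs from the paper is in this last contradiction. The paper does not compute at $p^2$; instead it quotes~\cite[Thms.~2.6,~2.7]{Kum12} (density of sign changes of $c(p)$) to rule out $c(n)=0$ for all large $n$. Your Hecke computation is more self-contained and avoids that external input, which is a modest gain; the paper's route, on the other hand, makes the dependence on the sign-change machinery explicit and connects the corollary back to the theme of the article.
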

\begin{proof}
Assume that $f \neq 1$.
Since $K_f=\Q$, one can show that if $a(0)=1$ and $a(n)\in \Z$ for all $n \in \N$, 
then $c(n) \in \Z$ for all $n \in \N$ (cf. the proof of~\cite[Thm. 1]{KM08-N}).

By~\cite[Theorems $2.6$, $2.7$]{Kum12}, we see that $c(n)$ cannot be equal to $0$,
for all $n \gg 0$. Therefore, by proposition~\ref{corollary-simple},
there exists $n \in \N$ such that $c(n)$ is non-integral,
which is a contradiction.  
\end{proof}
\begin{remark}
In the above proof, we could have also used Proposition~\ref{main-prop} to get the contradiction.
\end{remark}

Now, we give an upper bound on the first sign change of general $q$-exponents of a GMF.
Similar results have been considered in~\cite{CK13} for half-integral weight modular forms,
in~\cite{CK02} for integral weight modular forms. 
\begin{prop}
Let $N \in \N$ be square-free integer and $f$ be a non-constant GMF of weight $0$ on $\Gamma_0(N)$ with $\mrm{div}(f) = 0$ such that
its $q$-exponents $c(n)(n \in \N)$ are real. Then there exists $d_1, d_2 \in \N$ with
$$d_1,d_2 \ll N_0: = N^5 \log^{10}(N) \mrm{exp}\left(c\frac{\log(N+1)}{\log\log(N+2)}\right)\mrm{max}\{\psi_2(N), 4\sqrt{N}\log^{16}(2N)\} $$
such that $c(d_1)>0, c(d_2)<0.$ Here, $c>0$ is an absolute constant and $\psi_2(N):= \prod_{p \mid N}\frac{\log(2N)}{\log p}$. 
\end{prop}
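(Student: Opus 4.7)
The strategy is to transfer a first sign change bound for the associated weight-$2$ cusp form across the divisor identity $b(n)=-\sum_{d\mid n}dc(d)$ recorded in~\eqref{key-equation}. Let $g=\frac{1}{2\pi i}f'/f$ be the cusp form attached to $f$ via~\eqref{log-derivative}. Because $f$ is non-constant, $g$ is a nonzero cusp form of weight $2$ on $\Gamma_0(N)$ with trivial character, and because the $c(n)$ are real, so are the Fourier coefficients $b(n)$ of $g$. Thus $g$ is a nonzero weight-$2$ cusp form on a squarefree level with real Fourier coefficients --- exactly the setting in which known first sign change theorems apply.

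The next step is to invoke such a first sign change theorem to produce integers $n_1,n_2\leq N_0$ with $b(n_1)>0$ and $b(n_2)<0$. The specific shape of $N_0$ in the statement --- the $\psi_2(N)$-factor, the alternative $4\sqrt{N}\log^{16}(2N)$, the $N^5\log^{10}(N)$ prefactor, and the Erd\H{o}s--Iwaniec divisor-type term $\exp(c\log(N+1)/\log\log(N+2))$ --- is precisely the shape one obtains by combining a sign change input for truncated sums $\sum_{n\leq X}b(n)$ (in the spirit of~\cite{CK02}) with a uniform estimate handling the ramified primes through $\psi_2(N)$, together with standard divisor-function bounds used to pass from average to pointwise information.

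Given such $n_1,n_2$, the transfer to the $c(n)$ is immediate from~\eqref{key-equation}. At $n=n_1$ one has $\sum_{d\mid n_1}dc(d)<0$, so some divisor $d_2\mid n_1$ must satisfy $c(d_2)<0$; at $n=n_2$ one has $\sum_{d\mid n_2}dc(d)>0$, so some divisor $d_1\mid n_2$ must satisfy $c(d_1)>0$. In both cases $d_1,d_2\leq\max(n_1,n_2)\leq N_0$, which is what the proposition asserts.

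The main obstacle is the second step. The M\"obius-type inversion that drives the third step comes essentially for free: if $b$ exhibits both signs in $[1,N_0]$, then so does $c$, since any positive value of $b(n)$ forces a negative $c$-value at a divisor of $n$ (and symmetrically). All the analytic content sits in producing the matched bound $N_0$ on the first sign change of $b(n)$ \emph{uniformly} in the cusp form $g$ --- the proposition does not assume that $g$ is a Hecke eigenform, so the input must apply to an arbitrary real weight-$2$ cusp form on squarefree $\Gamma_0(N)$. The $\psi_2(N)$-term, the max with $4\sqrt{N}\log^{16}(2N)$, and the divisor-bound exponential are the footprints of the non-vanishing and averaging technology needed to secure that uniform input.
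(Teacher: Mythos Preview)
Your proposal is correct and follows essentially the same route as the paper: pass to the weight-$2$ cusp form $g$, invoke the Choie--Kohnen first sign change bound~\cite{CK02} to obtain $n_1,n_2\ll N_0$ with $b(n_1),b(n_2)$ of opposite signs, and then read off divisors $d_1,d_2$ with $c(d_1)>0$, $c(d_2)<0$ from the identity $b(n)=-\sum_{d\mid n}dc(d)$. The only cosmetic difference is that you write $n_i\le N_0$ where $n_i\ll N_0$ is what the cited input actually gives.
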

\begin{proof}
Let $g$ be the logarithmic derivative of $f$ as in~\eqref{log-derivative}. 
By~\cite{KM03}, we know that $g=\sum_{n=1}^{\infty} b(n)q^n \in S_2(\Gamma_0(N))$.
The Fourier coefficients of $g$ and $q$-exponents of $f$ are related by:
\begin{equation}
\label{key-local}
b(n) = -{\sum}_{d|n} dc(d)
\end{equation}
The main theorem of~\cite{CK02} implies that there exists $n_1, n_2 \in \N$ with $n_1,n_2 \ll N_0$ such that 
such that $b(n_1)<0, b(n_2)>0$. Hence, there exists a divisor $d_1$ (resp., $d_2$) of $n_1$ (resp.,  of $n_2$)
such that $c(d_1)>0$ (resp., $c(d_2)<0$), by~\eqref{key-local}.
Since, $d_i \leq n_i$  and $n_i  \ll N_0$, we have that $d_i \ll N_0$, for $i=1,2$.
\end{proof}
When the logarithmic derivative of $f$ (as in~\eqref{log-derivative}) is a normalized Hecke eigenform,
the above bound can be improved.

\begin{prop}
Let $f$ be a non-constant GMF of weight $0$ on $\Gamma_0(N)$ with $\mrm{div}(f) = 0$ and  with $q$-exponents $c(n)(n \in \N)$.
Suppose $g$ is (as in~\ref{log-derivative}) a normalized Hecke eigenform. Then there exists 
$1< d_0$ with $d_0 \ll (4N)^{\frac{3}{8}}$ and $(d_0,N)=1$
such that $c(d_0)>0$, where the implied constant is absolute.
\end{prop}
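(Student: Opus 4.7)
The plan is to take $d_0$ to be a prime coprime to $N$, thereby reducing the proposition to a first-negative-sign statement for the Hecke eigenvalues of $g$ at such primes. For any prime $p$ with $p\nmid N$, the Möbius inversion~\eqref{key-equation} specialises to
\[ p\,c(p) \;=\; -\sum_{d\mid p} \mu(d)\,b(p/d) \;=\; -\bigl(b(p)-b(1)\bigr) \;=\; 1-b(p), \]
using that $g$ is normalised with $b(1)=1$ (equivalently, this is the relation $b(p)=1-pc(p)$ already used in the proof of Theorem~\ref{multiplication1thm}). Hence $c(p)>0$ precisely when $b(p)<1$, and in particular $c(p)>0$ whenever $b(p)\leq 0$.

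It therefore suffices to exhibit a prime $p\nmid N$ with $p\ll (4N)^{3/8}$ and $b(p)\leq 0$. This is supplied by the known bound on the first negative Hecke eigenvalue at primes unramified at the level: for a normalised Hecke eigenform in $S_k(\Gamma_0(N))$ with trivial nebentypus (hence with real Fourier coefficients), there exists a prime $p\nmid N$ with $p\ll (kN)^{3/8}$ such that $b(p)<0$, with an absolute implied constant. Specialising to $k=2$ produces the desired prime; setting $d_0:=p$, we obtain $c(d_0)=(1-b(p))/p > 1/p > 0$, and the requirements $1<d_0$ and $(d_0,N)=1$ are immediate from the construction.

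The main obstacle in this plan is not analytic or algebraic but bibliographic: one needs to cite the precise first-negative-sign result that gives the exponent $3/8$ in the level aspect for weight-$2$ eigenforms with trivial character (results of this type are typically obtained by a Hecke-relation argument combined with subconvex or Rankin--Selberg estimates, and the exponent $3/8$ is the weight-$2$ specialisation of a bound of the form $(kN)^{3/8}$ found in the literature on sign changes). Once that reference is pinned down, the remaining argument reduces to the one-line Möbius-inversion computation above. A slightly sharper conclusion is available — Deligne's bound gives $b(p)\geq -2\sqrt{p}$, hence in fact $c(d_0)\geq (1+2\sqrt{d_0})/d_0$ at the prime produced — but for the stated positivity this is not needed.
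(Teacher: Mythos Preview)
Your approach differs from the paper's, and the difference matters. The paper does \emph{not} look for a prime. It invokes Matom\"aki's Theorem~1, which only supplies an integer $n_0$ with $(n_0,N)=1$, $n_0 \ll (4N)^{3/8}$, and $b(n_0)<0$; then, using $b(n_0)=-\sum_{d\mid n_0} d\,c(d)$ together with $c(1)=-b(1)=-1$, it extracts a proper divisor $d_0\mid n_0$ with $c(d_0)>0$. The coprimality $(d_0,N)=1$ comes for free from $d_0\mid n_0$.

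Your route instead demands a \emph{prime} $p\nmid N$ with $p\ll (4N)^{3/8}$ and $b(p)<0$. That is a strictly stronger input than what \cite{Mat12} provides: Matom\"aki's $3/8$-exponent is for general $n$ coprime to the level, not for primes. The prime version in the literature (e.g.\ Iwaniec--Kohnen--Sengupta \cite{IKS07}) carries a larger exponent. So the ``bibliographic obstacle'' you flag is not merely a matter of locating a citation; the result you need with exponent $3/8$ does not appear to be available, and the paper's divisor trick is precisely what lets one get away with the weaker general-$n$ input. If you can point to a prime-level result with the required exponent your argument is fine, but as written this is a gap.

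A minor slip in your closing remark: Deligne's bound $b(p)\geq -2\sqrt{p}$ yields $c(p)=\dfrac{1-b(p)}{p}\leq \dfrac{1+2\sqrt{p}}{p}$, an upper bound, not the lower bound you wrote.
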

\begin{proof}
By~\cite{KM03}, we know that $g=\sum_{n=1}^{\infty} b(n)q^n \in S_2(\Gamma_0(N))$.
The Fourier coefficients of $g$ and $q$-exponents of $f$ are related by:
\begin{equation}
\label{key-local-1}
b(n) = -{\sum}_{d|n} dc(d).
\end{equation}
Observe $c(1)=-b(1)=-1 < 0$. By~\cite[Thm. 1]{Mat12}, there exists $n_0 \in \N$ with $n_0 \ll (4N)^{\frac{3}{8}}$ such that 
$b(n_0)<0$. Hence, there exists a divisor $d_0\ (>1)$ of $n_0$ such that $c(d_0)>0$,
by~\eqref{key-local-1}. Since, $d_0 \leq n_0$  and $n_0  \ll (4N)^{\frac{3}{8}}$, we have that $d_0 \ll (4N)^{\frac{3}{8}}$.
\end{proof}

\bibliographystyle{plain, abbrv}

\end{document}